\newtheorem{theorem}{Theorem}[section]
\newtheorem{lemma}[theorem]{Lemma}
\newtheorem{corollary}[theorem]{Corollary}
\theoremstyle{definition}
\newtheorem{example}[theorem]{Example}
\numberwithin{equation}{section}
\newcommand{\abs}[1]{|#1|}
\newcommand{\abslarge}[1]{\left\lvert#1\right\rvert}
\newcommand{\norm}[1]{\lVert#1\rVert}
\newcommand{\normps}[1]{\lVert#1\rVert_\Delta}
\newcommand{\wstar}{weak$^\ast$\ }
\newcommand{\iin}{\,\mathord\in\,}
\newcommand{\M}{\mathfrak{m}}
\newcommand{\N}{\mathfrak{n}}
\newcommand{\Ub}{\mathsf{U_b}}
\newcommand{\LUC}{\mathsf{LUC}}
\newcommand{\MeasSymb}{{\mathsf{M}}}
\newcommand{\UMeas}{\mathsf{\MeasSymb_u}}
\newcommand{\tMeas}{\mathsf{\MeasSymb_t}}
\newcommand{\Mol}{\mathsf{Mol}}
\newcommand{\fset}{\mathcal{F}}     
\newcommand{\BLipb}{\mathsf{BLip_b}}
\newcommand{\real}{\mathbb{R}}
\newcommand{\conv}{\star}               
\newcommand{\bigsep}{\mbox{\large $\;\mid$\;}}
\newcommand{\sect}[1]{\setminus_{#1}}
\newcommand{\pmass}{\partial}           
\newcommand{\rmd}{\,\mathrm{d}}
\newcommand{\compln}[1]{\widehat{#1}}
\newcommand{\compactn}{\compln{\mathsf{p}}}
\newcommand{\RUC}{\mathsf{RUC}}
\newcommand{\ellinfty}{\ell_\infty}
\begin{document}

\title{Continuity of convolution and SIN groups}
\author{Jan Pachl and Juris Stepr\={a}ns}

\begin{abstract}
Let the measure algebra of a topological group $G$ be equipped with
the topology of uniform convergence on bounded right uniformly equicontinuous sets of functions.
Convolution is separately continuous on the measure algebra,
and it is jointly continuous if and only if $G$ has the SIN property.
On the larger space $\LUC(G)^\ast$ which includes the measure algebra,
convolution is also jointly continuous if and only if the group has the SIN property,
but not separately continuous for many non-SIN groups.
\end{abstract}

\maketitle


\section{Introduction}
    \label{sec:intro}

Throughout the paper we assume that topological groups are Hausdorff,
linear spaces are over the field $\real$ of real numbers,
and functions are real-valued.
Our results hold also when scalars are the complex numbers, with essentially the same proofs.

When $G$ is a topological group,
the set of all continuous right-invariant pseudometrics on $G$
induces the topology of $G$ and its right uniformity~\cite[sec.3.2]{Pachl2013usm}
\cite[7.4]{Roelcke1981ust}.
In what follows, we denote by $G$ not only $G$ with its topology but also $G$ with its
right uniformity.
Since we do not consider other uniform structures on $G$, this convention
will not lead to any confusion.

A pseudometric on $G$ is \emph{bi-invariant} iff it is both left- and right-invariant.
A topological group $G$ is a \emph{SIN group},
or has the \emph{SIN property}, iff its topology (equivalently, its right uniformity)
is induced by the set of all continuous bi-invariant pseudometrics~\cite[7.12]{Roelcke1981ust}.

The space $\LUC(G)=\Ub(G)$ of bounded uniformly continuous functions on $G$ has a prominent role
in abstract harmonic analysis.
It is a Banach space with the sup norm.
Its dual $\LUC(G)^\ast$ is a Banach algebra in which the multiplication is the
\emph{convolution operation}~$\conv$, defined as follows.
When $\varphi$ is an expression with several parameters,
$\sect{x}\varphi$ denotes $\varphi$ as a function of $x$.
Define
\begin{align*}
{\N\bullet} f (x) & :=\N(\sect{y} f(xy))
   \quad \text{ for }\N\iin\LUC(G)^\ast, f\iin \LUC(G), x\iin G     \\
\M\conv\N (f) & := \M( {\N\bullet} f )
   \quad \text{ for } \M,\N\iin\LUC(G)^\ast, f\iin \LUC(G)
\end{align*}
Here $(\N,f)\mapsto{\N\bullet} f$ is the canonical left action of $\LUC(G)^\ast$ on $\LUC(G)$.

We identify every finite Radon measure $\mu$ on $G$ with the functional $\M\iin\LUC(G)^\ast$
for which $\M(f) = \int f \rmd \mu$, $f\iin\LUC(G)$.
That way the space $\tMeas(G)$ of finite Radon (a.k.a. tight) measures on $G$ is identified
with a subspace of $\LUC(G)^\ast$.
With convolution, this is the \emph{measure algebra} of $G$, often denoted simply $M(G)$.

Along with the norm topology, another topology on $\LUC(G)^\ast$ and $\tMeas(G)$ commonly considered
is the \wstar topology $w(\LUC(G)^\ast,\LUC(G))$.
Questions about \emph{separate} \wstar continuity of convolution on $\LUC(G)^\ast$
lead to the problem of characterizing the \wstar topological centre of $\LUC(G)^\ast$
and of the LUC compactification of $G$ ---
see~\cite{Ferri2007tca}, \cite{Lau1986cam}, \cite{Lau1995tcc} and Chapter~9 of~\cite{Pachl2013usm}.
\emph{Joint} \wstar continuity of convolution on $\LUC(G)^\ast$
was studied by Salmi~\cite{Salmi2010jcm},
who showed that convolution need not be jointly \wstar continuous
even on bounded subsets of $\tMeas(G)$.

Here we consider the UEB topology on the space $\LUC(G)^\ast$.
This topology, finer than the \wstar topology,
arises naturally in the study of continuity properties of convolution.
When restricted to the space $\tMeas(G)$,
the UEB topology and the \wstar topology $w(\tMeas(G),\LUC(G))$
are closely related:
It follows from general results in Chapter~6 of~\cite{Pachl2013usm}
that these two topologies on $\tMeas(G)$ have the same dual $\LUC(G)$
and the same compact sets (hence the same convergent sequences),
and they coincide on the positive cone of $\tMeas(G)$.

The UEB topology may be defined independently of the group structure of $G$,
for a general uniform space; for the details of the general theory we refer the reader
to~\cite{Pachl2013usm}.
In our current setting of the right uniformity on a topological group $G$,
the UEB topology is defined as follows.
As in~\cite{Pachl2013usm}, for a continuous right-invariant pseudometric $\Delta$ on $G$
and $\M\iin\LUC(G)^\ast$ let
\begin{align*}
\BLipb(\Delta) & := \{ f \colon G \to [-1,1] \bigsep \abs{f(x)-f(y)} \leq \Delta(x,y) \;
\text{ for all } x,y \iin G \} \\
\normps{\M} & := \sup \{ \M(f) \mid f\iin\BLipb(\Delta) \}
\end{align*}
The UEB topology on $\LUC(G)^\ast$ is the locally convex topology defined by the seminorms
$\normps{\cdot}$ where $\Delta$ runs through continuous right-invariant pseudometrics on $G$.

In~\cite{Neufang2015uem} the UEB topology is defined as the topology of uniform convergence
on equi-LUC subsets of $\LUC(G)$.
That definition is equivalent to the one given here,
since by Lemma~3.3 in~\cite{Pachl2013usm} for every equi-LUC set $\fset\subseteq\LUC(G)$
there are $r\iin\real$ and a continuous right-invariant pseudometric $\Delta$ on $G$ such that
$\fset \subseteq r \BLipb(\Delta)$.

When the group $G$ is locally compact
and $\tMeas(G)$ is identified with the algebra of right multipliers of $\mathsf{L}_1(G)$,
the UEB topology on $\tMeas(G)$ coincides
with the right multiplier topology~\cite[Th.3.3]{Neufang2015uem}.
If $G$ is discrete then $\LUC(G)=\ell_\infty(G)$
and the UEB topology on $\LUC(G)^\ast$ is simply its norm topology.
If $G$ is compact then $\LUC(G)$ is the space of continuous functions on $G$ and
the UEB topology is the topology of uniform convergence on norm-compact subsets of $\LUC(G)$.

When the group $G$ is metrizable by a right-invariant metric $\Delta$,
the seminorm $\normps{\cdot}$ on $\LUC(G)^\ast$ is a particular case of
the Kantorovich--Rubinshte{\u\i}n norm, which has many uses
in topological measure theory and in the theory of optimal transport~\cite[8.3]{Bogachev2007mt}
\cite[6.2]{Villani2009oto}.
In this case the topology of $\normps{\cdot}$
coincides with the UEB topology on bounded subsets of $\LUC(G)^\ast$
\cite[sec.5.4]{Pachl2013usm}
but typically not on the whole space $\LUC(G)^\ast$.
As we show in section~\ref{sec:jointc},
when considered on the whole space $\LUC(G)^\ast$ or even $\tMeas(G)$,
convolution behaves better in the UEB topology than in the $\normps{\cdot}$ topology.

Our results in this paper complement those in\cite{Neufang2015uem}.
By Corollary~4.6 and Theorem~4.8 in~\cite{Neufang2015uem},
convolution is jointly UEB continuous on bounded subsets $\LUC(G)^\ast$ when $G$ is a SIN group,
and jointly UEB continuous on the whole space $\LUC(G)^\ast$ when $G$ is a locally compact SIN group.
Our main result (Theorem~\ref{th:main} in section~\ref{sec:jointc}) states that convolution
is jointly UEB continuous on $\LUC(G)^\ast$
if and only if it is jointly UEB continuous on $\tMeas(G)$
if and only if $G$ is a SIN group.
In section~\ref{sec:sepc} we prove that convolution is separately UEB continuous on $\tMeas(G)$
for every topological group $G$,
but not separately continuous on $\LUC(G)^\ast$ for many non-SIN groups.

For locally compact groups, Lau and Pym~\cite{Lau1995tcc} established
the connection between the SIN property and the \wstar continuity of multiplication
in the LUC compactification.
Corollary~\ref{cor:disLC} in section~\ref{sec:sepc} extends one of their results
to a larger class of topological groups.


\section{Preliminaries}

In this section we establish several properties of SIN groups that are needed in the proof
of the main theorem in section~\ref{sec:jointc}.

We specialize the notation of~\cite{Pachl2013usm}, where it is used for functions and measures
on general uniform spaces, to the case of a topological group $G$.
For every $x\iin G$ we denote by $\pmass(x)$ the \emph{point mass} at $x$,
the functional in $\LUC(G)^\ast$ defined by $\pmass(x)(f)=f(x)$ for $f\iin\LUC(G)$.
$\Mol(G)\subseteq\LUC(G)^\ast$ is the space of \emph{molecular measures};
that is, finite linear combinations of point masses.
Obviously $\Mol(G)\subseteq\tMeas(G)$.
For the molecular measure of the special form $\M=\pmass(x)-\pmass(y)$, $x,y\iin G$,
and for any continuous right-invariant pseudometric $\Delta$ on $G$ we have
$\normps{\M}= \min(2,\Delta(x,y))$,
by Lemma~5.12 in~\cite{Pachl2013usm}.

The UEB closure of $\Mol(G)$ in $\LUC(G)^\ast$ is the space $\UMeas(G)\supseteq\tMeas(G)$
of \emph{uniform measures} on the uniform space $G$.
In this paper we do not deal with the space $\UMeas(G)$;
we only point out where a result that we prove for $\tMeas(G)$ holds more generally for $\UMeas(G)$.
The reader is referred to~\cite{Pachl2013usm} for the theory of uniform measures.

We start with a characterization of SIN groups which is one part of~\cite[2.17]{Roelcke1981ust}.

\begin{lemma}
    \label{lem:SINinner}
A topological group $G$ with identity element $e$ is a SIN group if and only if
for every neighbourhood $U$ of $e$ there exists a neighbourhood $V$ of $e$
such that $xVx^{-1} \subseteq U$ for all $x\iin G$.
\qed
\end{lemma}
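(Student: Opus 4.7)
The two implications have rather different flavours, so I will handle them separately.

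\emph{Forward direction ($\Rightarrow$).} Suppose $G$ is SIN and $U$ is a neighbourhood of $e$. Pick continuous bi-invariant pseudometrics $\Delta_1,\dots,\Delta_n$ and $\varepsilon>0$ such that $V:=\{y\iin G\colon \Delta_i(y,e)<\varepsilon\ \text{for all }i\leq n\}\subseteq U$. For any $x\iin G$ and $v\iin V$, bi-invariance yields
\[
\Delta_i(xvx^{-1},e)=\Delta_i(xv,x)=\Delta_i(v,e)<\varepsilon,
\]
so $xVx^{-1}\subseteq V\subseteq U$. This is the easy half.

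\emph{Backward direction ($\Leftarrow$).} Here is where the work lies. Given a continuous right-invariant pseudometric $d$ on $G$, I will manufacture a continuous bi-invariant pseudometric $\Delta\geq d\wedge 1$. Set
\[
\Delta(x,y):=\sup_{z\iin G} (d(zx,zy)\wedge 1).
\]
Right-invariance of $d$ gives right-invariance of $\Delta$ directly, while a substitution $w=za$ gives left-invariance, so $\Delta$ is bi-invariant; it is a pseudometric as a supremum of pseudometrics. The key point is continuity at $e$: using right-invariance of $d$, $\Delta(x,e)=\sup_z(d(zxz^{-1},e)\wedge 1)$. Given $\varepsilon>0$, let $U=\{y\colon d(y,e)<\varepsilon\}$ and invoke the hypothesis to obtain a neighbourhood $V$ of $e$ with $zVz^{-1}\subseteq U$ for all $z\iin G$; then $\Delta(x,e)\leq\varepsilon$ whenever $x\iin V$. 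Because $\Delta$ is right-invariant, continuity at $e$ upgrades to continuity everywhere. Since $\Delta\geq d\wedge 1$, the family of continuous bi-invariant pseudometrics induces the same uniformity as the family of continuous right-invariant ones, so $G$ is SIN.

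\emph{Main obstacle.} The delicate step is verifying continuity of $\Delta$: a priori the supremum over all conjugations $z\cdot z^{-1}$ could blow up or fail to be continuous, and this is exactly what the neighbourhood condition of the lemma is tailored to prevent. Truncating $d$ at $1$ guarantees $\Delta$ is finite-valued, and the uniformity of the conjugation-invariance hypothesis ($V$ chosen independently of $z$) is precisely what makes the supremum small on a neighbourhood of $e$. Everything else is formal manipulation of invariance identities.
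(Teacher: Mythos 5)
Your argument is correct. Note that the paper itself gives no proof of this lemma: it is quoted from Roelcke--Dierolf \cite[2.17]{Roelcke1981ust}, where the standard treatment runs through the equality of the left and right uniformities and invariant neighbourhood bases. Your proof is a self-contained alternative that is well matched to the paper's chosen definition of SIN (topology induced by continuous bi-invariant pseudometrics): the forward direction is the expected two-line computation from bi-invariance, and in the backward direction the construction $\Delta(x,y)=\sup_z\bigl(d(zx,zy)\wedge 1\bigr)$ produces the required bi-invariant continuous majorant of $d\wedge 1$ directly, with the conjugation-invariant neighbourhood $V$ doing exactly the work needed to keep the supremum small near $e$. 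All the verifications you sketch go through: the supremum of pseudometrics bounded by $1$ is a finite pseudometric, the invariance identities are as stated, and continuity of a right-invariant pseudometric at $(e,e)$ does upgrade to joint continuity via $\abs{\Delta(x,e)-\Delta(x',e)}\leq\Delta(xx'^{-1},e)$. The only cosmetic point is that your estimate gives $V\subseteq\{x\mid\Delta(x,e)\leq\varepsilon\}$ rather than a strict inequality; running the argument with $\varepsilon/2$ fixes this trivially. Incidentally, your construction is close in spirit to the paper's own Lemma~\ref{lem:upper}, which also manufactures a dominating bi-invariant pseudometric, though there the SIN hypothesis is already available.
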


\begin{lemma}
    \label{lem:upper}
Let $G$ be a SIN group and $\Delta$ a bounded continuous right-invariant pseudometric on $G$.
Then there is a continuous bi-invariant pseudometric $\Theta$ on $G$ such that
$\Theta\geq\Delta$.
\end{lemma}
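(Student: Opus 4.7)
The plan is to construct $\Theta$ as a symmetrization of $\Delta$ over left translations: set
\[
\Theta(x,y) := \sup_{g\iin G} \Delta(gx, gy).
\]
Since $\Delta$ is bounded, say by $M$, this supremum is finite and bounded by $M$, and $\Theta$ inherits the pseudometric axioms (symmetry, triangle inequality, vanishing on the diagonal) from $\Delta$ as a pointwise supremum of pseudometrics. Taking $g=e$ immediately gives $\Theta\geq\Delta$.

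Next I would verify bi-invariance. Left-invariance is a direct reindexing: for any $h\iin G$, $\Theta(hx, hy)=\sup_g\Delta(ghx,ghy)=\sup_{g'}\Delta(g'x,g'y)=\Theta(x,y)$ under $g'=gh$. Right-invariance follows from right-invariance of $\Delta$, since $\Delta(gxh, gyh)=\Delta(gx,gy)$ for every $g,h$.

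The one real content is continuity of $\Theta$. Because $\Theta$ is bi-invariant, the triangle inequality gives $|\Theta(x,y)-\Theta(x_0,y_0)|\leq\Theta(xx_0^{-1},e)+\Theta(yy_0^{-1},e)$ (after reducing via left/right translations), so it suffices to show that $\Theta(x,e)\to 0$ as $x\to e$. Using right-invariance of $\Delta$, rewrite
\[
\Theta(x,e)=\sup_{g\iin G}\Delta(gx,g)=\sup_{g\iin G}\Delta(gxg^{-1},e).
\]
Fix $\epsilon>0$. By continuity of $\Delta$, pick a neighbourhood $U$ of $e$ with $\Delta(u,e)\leq\epsilon$ for $u\iin U$. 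Lemma~\ref{lem:SINinner} now produces a neighbourhood $V$ of $e$ with $gVg^{-1}\subseteq U$ for every $g\iin G$. Then $x\iin V$ forces $gxg^{-1}\iin U$ for all $g$, so $\Theta(x,e)\leq\epsilon$.

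The main (and essentially only) obstacle is this last step: without the uniform conjugation control afforded by the SIN property, the supremum defining $\Theta$ could fail to be continuous at $e$ even when $\Delta$ is. That is precisely where Lemma~\ref{lem:SINinner} is indispensable; everything else is formal verification.
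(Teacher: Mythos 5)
Your proof is correct, and it takes a genuinely different route from the paper's. You construct $\Theta$ explicitly as the left-translation symmetrization $\Theta(x,y)=\sup_{g\iin G}\Delta(gx,gy)$, check bi-invariance formally, and then use the small-invariant-neighbourhoods characterization (Lemma~\ref{lem:SINinner}) to get continuity of $\Theta$ at the identity, which by bi-invariance yields continuity everywhere; all the steps check out, including the reduction $\Theta(x,e)=\sup_g\Delta(gxg^{-1},e)$ via right-invariance of $\Delta$. The paper instead mimics Lemma~3.3 of~\cite{Pachl2013usm}: it uses the SIN property in the form ``the right uniformity is generated by continuous bi-invariant pseudometrics'' to choose a sequence $\Theta_j$ of bi-invariant pseudometrics with $\Theta_j(x,y)<1\Rightarrow\Delta(x,y)<2^{-(j+1)}$, and sums $\sum_j 2^{-j}\min(\Theta_j,1)$; continuity there is automatic since each $\Theta_j$ is continuous and the series converges uniformly. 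Your construction is more canonical --- it produces the least left-invariant majorant of $\Delta$, preserves the bound of $\Delta$, and isolates exactly where SIN enters (continuity of the supremum) --- while the paper's dyadic-summation argument is the standard template that fits the uniform-space machinery it cites and avoids having to verify continuity of a supremum by hand. Both uses of the SIN hypothesis are equivalent via~\cite[2.17]{Roelcke1981ust}, so this is a legitimate alternative proof rather than a shortcut that hides a gap.
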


\begin{proof}
The proof mimics that of Lemma~3.3 in~\cite{Pachl2013usm}.
It is enough to consider the case $\Delta\leq 1$.
As $G$ is a SIN group,
there are continuous bi-invariant pseudometrics $\Theta_j$ for $j=0,1,\dotsc$, such that
\[
\forall x,y \iin S \;\; [ \; \Theta_j (x,y) < 1 \;
\Rightarrow \; \Delta(x,y) < \frac{1}{2^{j+1}} \; ] .
\]
Define $\Theta$ by
\[
\Theta (x,y) := \sum_{j=0}^\infty \frac{1}{2^{j}} \; \min(\Theta_j (x,y),1) .
\]
If $x,y\iin X$ and $j$ are such that $\Theta (x,y) < 1 / 2^{j} $
then $\Theta_j (x,y) < 1$, whence $\Delta(x,y) < 1 / 2^{j+1}$.
It follows that $\Theta\geq\Delta$.
\end{proof}

\begin{corollary}
    \label{cor:biinv}
Let $G$ be a SIN group.
Then the UEB topology on $\LUC(G)^\ast$ is defined by the seminorms $\normps{\cdot}$
where $\Delta$ runs through continuous bi-invariant pseudometrics on $G$.
\qed
\end{corollary}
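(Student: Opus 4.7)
The plan is straightforward. By definition, the UEB topology on $\LUC(G)^\ast$ is generated by the seminorms $\normps{\cdot}$ as $\Delta$ ranges over all continuous right-invariant pseudometrics on $G$. Since every bi-invariant pseudometric is in particular right-invariant, the topology generated by only the bi-invariant ones is automatically coarser than the full UEB topology. The real work is the reverse inclusion: for each continuous right-invariant pseudometric $\Delta$ I must exhibit a continuous bi-invariant pseudometric $\Theta$ whose associated seminorm dominates the one associated to $\Delta$.

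For this I would first reduce to the case where $\Delta$ is bounded, using the observation that $\BLipb(\Delta) = \BLipb(\min(\Delta,2))$: functions in $\BLipb(\Delta)$ take values in $[-1,1]$, hence automatically satisfy $|f(x)-f(y)| \leq 2$, and capping $\Delta$ at $2$ therefore leaves the set of unit-ball Lipschitz functions unchanged. With $\Delta$ bounded, Lemma~\ref{lem:upper} supplies a continuous bi-invariant pseudometric $\Theta \geq \Delta$. The final step is the monotonicity of $\BLipb$ in its argument: if $\Theta \geq \Delta$ and $f \in \BLipb(\Delta)$, then $|f(x)-f(y)| \leq \Delta(x,y) \leq \Theta(x,y)$ while $f$ still takes values in $[-1,1]$, so $f \in \BLipb(\Theta)$. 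Hence $\BLipb(\Delta) \subseteq \BLipb(\Theta)$, and taking suprema over these sets yields the desired domination of the $\Delta$-seminorm by the $\Theta$-seminorm on every $\M \in \LUC(G)^\ast$.

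I do not foresee a real obstacle. This corollary is essentially a direct packaging of Lemma~\ref{lem:upper} with the trivial monotonicity of the $\BLipb$ construction in the pseudometric, and the only nuance is the preliminary reduction to a bounded pseudometric before invoking that lemma.
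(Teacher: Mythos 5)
Your proof is correct and is exactly the argument the paper intends: the corollary is stated with a \qed as an immediate consequence of Lemma~\ref{lem:upper}, via truncating $\Delta$ at $2$ (which leaves $\BLipb(\Delta)$ unchanged) and then using $\BLipb(\Delta)\subseteq\BLipb(\Theta)$ for $\Theta\geq\Delta$ to dominate the seminorms. Nothing is missing.
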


If $\Delta$ is a continuous or left- or right-invariant pseudometric on $G$,
then so is the pseudometric $\sqrt{\Delta}$ defined by $\sqrt{\Delta}(x,y):=\sqrt{\Delta(x,y)}$
for $x,y\iin G$.

In the sequel we deal with functions of the form ${f}/{\sqrt{\norm{f}}}$ where
$f\iin\LUC(G)$. To simplify the notation, we adopt the convention that
${f}/{\sqrt{\norm{f}}}=f$ when $f$ is identically 0.

\begin{lemma}
    \label{lem:sqroot}
Let $\Delta$ be a pseudometric on a set $G$.
Then ${f}/{\sqrt{\norm{f}}}\iin\BLipb(2\sqrt{\Delta})$
for every $f\iin\BLipb(\Delta)$.
\end{lemma}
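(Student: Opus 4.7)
The statement breaks into two pieces: first, that $g := f/\sqrt{\norm{f}}$ still takes values in $[-1,1]$, and second, that it is $2\sqrt{\Delta}$-Lipschitz. The convention $g=f$ for $f\equiv 0$ makes both conclusions trivial in that case, so I would assume $\norm{f}>0$.

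The range condition is immediate: $\abs{g(x)} \leq \norm{f}/\sqrt{\norm{f}} = \sqrt{\norm{f}} \leq 1$ because $f\iin\BLipb(\Delta)$ forces $\norm{f}\leq 1$.

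The Lipschitz estimate reduces, after multiplying through by $\sqrt{\norm{f}}$, to proving
\[
   \abs{f(x)-f(y)} \;\leq\; 2\sqrt{\Delta(x,y)\,\norm{f}}
\]
for all $x,y\iin G$. At this point I have two bounds available on $\abs{f(x)-f(y)}$: the Lipschitz bound $\Delta(x,y)$, and the range bound $2\norm{f}$ coming from $f(G)\subseteq[-\norm{f},\norm{f}]$. The plan is then to split cases on whether $\Delta(x,y)\leq\norm{f}$ or $\Delta(x,y)>\norm{f}$. In the first case the Lipschitz bound alone gives $\abs{f(x)-f(y)}\leq\Delta(x,y)\leq\sqrt{\Delta(x,y)\,\norm{f}}$; in the second case the range bound gives $\abs{f(x)-f(y)}\leq 2\norm{f}\leq 2\sqrt{\Delta(x,y)\,\norm{f}}$. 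Combining the two cases yields the displayed inequality with the constant $2$.

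There is no real obstacle; this is essentially the standard ``min of a linear and a constant bound dominated by a geometric mean'' trick. The only point to be a little careful about is the convention for $f\equiv 0$, which is why the lemma states it explicitly, and checking that the factor of $2$ is really needed (it is, because the range of $f$ lies in an interval of length $2\norm{f}$, not $\norm{f}$).
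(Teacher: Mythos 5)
Your proof is correct and is essentially the paper's own argument: the same case split on whether $\Delta(x,y)\leq\norm{f}$, using the Lipschitz bound in one case and the range bound $\abs{f}\leq\norm{f}$ in the other, only rewritten by clearing the denominator $\sqrt{\norm{f}}$. The explicit check that $f/\sqrt{\norm{f}}$ still maps into $[-1,1]$ is a small point the paper leaves implicit.
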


\begin{proof}
Take any $x,y\iin G$, and consider two cases: \\
If $\norm{f}\leq\Delta(x,y)$ then
$\abslarge{{f(x)}/{\sqrt{\norm{f}}}},
     \abslarge{{f(y)}/{\sqrt{\norm{f}}}}\leq\sqrt{\Delta(x,y)}$,
hence
\[
\abslarge{\frac{f(x)}{\sqrt{\norm{f}}} - \frac{f(y)}{\sqrt{\norm{f}}}} \leq 2\sqrt{\Delta(x,y)} \;\;.
\]
If $\norm{f}>\Delta(x,y)>0$ then
\[
\abslarge{\frac{f(x)}{\sqrt{\norm{f}}} - \frac{f(y)}{\sqrt{\norm{f}}}}
\leq \frac{\abs{f(x)-f(y)}}{\sqrt{\Delta(x,y)}}
\leq \sqrt{\Delta(x,y)} \;\;.                               \qedhere
\]
\end{proof}

The following lemma is a key ingredient in the proof of Theorem~\ref{th:main}.

\begin{lemma}
    \label{lem:sqrootest}
Let $G$ be a topological group,
$\M, \N \iin \LUC(G)^\ast$,
and let $\Delta$ be a continuous bi-invariant pseudometric on $G$.
Then
\[
\normps{\M\conv\N} \leq \sqrt{2} \;\norm{\M}_{\sqrt{\Delta}} \;\norm{\N}_{2\sqrt{\Delta}} \;\;.
\]
\end{lemma}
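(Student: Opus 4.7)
The plan is to fix $f \iin \BLipb(\Delta)$, put $g := \N \bullet f$, and show that $g \iin c \cdot \BLipb(\sqrt\Delta)$ for $c := \sqrt 2 \, \norm{\N}_{2\sqrt\Delta}$. Given this, $\M \conv \N(f) = \M(g) \leq c \, \norm{\M}_{\sqrt\Delta}$, and taking the supremum over $f \iin \BLipb(\Delta)$ yields the inequality. The task thus reduces to a sup-norm bound and a $\sqrt\Delta$-Lipschitz bound on $g$, each by $c$.

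For the sup-norm bound, I would use the elementary inequality $\min(d, 2) \leq 2\sqrt d$ for $d \geq 0$, which, combined with $\abs{f} \leq 1$, gives the inclusion $\BLipb(\Delta) \subseteq \BLipb(2\sqrt\Delta)$. Left-invariance of $\Delta$ then shows $\sect{y} f(xy) \iin \BLipb(\Delta) \subseteq \BLipb(2\sqrt\Delta)$, so $\abs{g(x)} \leq \norm{\N}_{2\sqrt\Delta} \leq c$.

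The main step is the Lipschitz estimate. For fixed $x, x' \iin G$, put $h(y) := f(xy) - f(x'y)$. Right-invariance of $\Delta$ gives $\norm{h} \leq \Delta(x, x')$ (and trivially $\norm{h} \leq 2$), while left-invariance gives $\abs{h(y) - h(y')} \leq 2\Delta(y, y')$. Therefore $h/2 \iin \BLipb(\Delta)$, so Lemma~\ref{lem:sqroot} applies: with $M := \norm{h/2} = \norm{h}/2$, the function $h/(2\sqrt M)$ lies in $\BLipb(2\sqrt\Delta)$, whence
\[
\abs{g(x) - g(x')} = \abs{\N(h)} \leq 2\sqrt{M} \, \norm{\N}_{2\sqrt\Delta} = \sqrt{2\norm{h}} \, \norm{\N}_{2\sqrt\Delta} \leq \sqrt{2 \Delta(x, x')} \, \norm{\N}_{2\sqrt\Delta} = c \, \sqrt\Delta(x, x').
\]

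The delicate point is arranging $h$ so that Lemma~\ref{lem:sqroot} becomes applicable with a useful pseudometric. The key observation is that bi-invariance of $\Delta$ enters in two distinct roles: right-invariance is used to bound $\norm{h}$ in terms of $\Delta(x, x')$, while left-invariance is used to control the $\Delta$-Lipschitz behaviour of $h$ in the variable $y$. This is exactly where the SIN hypothesis of Theorem~\ref{th:main} becomes essential, and the factor $\sqrt 2$ together with the appearance of $\sqrt\Delta$ emerge naturally from the square root built into Lemma~\ref{lem:sqroot}.
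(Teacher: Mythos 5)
Your proof is correct and follows essentially the same route as the paper's: the same sup-norm bound via $\BLipb(\Delta)\subseteq\BLipb(2\sqrt{\Delta})$, and the same Lipschitz estimate obtained by applying Lemma~\ref{lem:sqroot} to the normalized difference function (your $h/2$ is exactly the paper's $g=\frac{1}{2}\sect{z}(f(xz)-f(yz))$), with left- and right-invariance of $\Delta$ used in precisely the same two roles.
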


\begin{proof}
Take any $f\iin\BLipb(\Delta)$.
As $\Delta$ is left-invariant, we have $\sect{z} f(xz) \iin \BLipb(\Delta)$ for every $x\iin G$,
and $\norm{{\N\bullet}f} \leq \normps{\N}$.
Now $\BLipb(\Delta)\subseteq\BLipb(\sqrt{2\Delta})\subseteq\BLipb(2\sqrt{\Delta})$
because $\sqrt{2t}\geq t$ for $0\leq t \leq 2$,
and thus
$\normps{\cdot}\leq\norm{\cdot}_{\sqrt{2\Delta}}\leq\norm{\cdot}_{2\sqrt{\Delta}}$.
It follows that
\begin{equation}
    \label{eq:RnNorm}
\norm{{\N\bullet}f} \leq \normps{\N} \leq \norm{\N}_{2\sqrt{\Delta}} \;\;.
\end{equation}
For $x,y\iin G$ we have $g:=\frac{1}{2}\sect{z}(f(xz)-f(yz)) \iin \BLipb(\Delta)$,
hence ${g}/{\sqrt{\norm{g}}}\iin\BLipb(2\sqrt{\Delta})$ by Lemma~\ref{lem:sqroot}.
Moreover $2\norm{g} \leq \Delta(x,y)$ because $\Delta$ is right-invariant, so that
\begin{equation}
    \label{eq:RnLip}
\begin{split}
\abs{{\N\bullet} f(x) - {\N\bullet} f(y)}
= 2 \abs{\N(g)}
& = 2 \sqrt{\norm{g}} \; \abslarge{ \N \left(\frac{g}{\sqrt{\norm{g}}} \right) } \\
& \leq \sqrt{2} \sqrt{\Delta(x,y)}  \;\norm{\N}_{2\sqrt{\Delta}} \;\;.
\end{split}
\end{equation}
Putting~(\ref{eq:RnNorm}) and~(\ref{eq:RnLip}) together, we get
${\N\bullet}f \iin \sqrt{2} \norm{\N}_{2\sqrt{\Delta}} \BLipb(\sqrt{\Delta})$.
Hence
\[
\abs{\M\conv\N (f)}
= \abs{\M({\N\bullet}f)}
\leq \sqrt{2} \;\norm{\M}_{\sqrt{\Delta}} \;\norm{\N}_{2\sqrt{\Delta}} \;\; .
\qedhere
\]
\end{proof}


\section{Joint UEB continuity}
    \label{sec:jointc}

For any topological group $G$
the operation $\conv$ is jointly UEB continuous on bounded subsets of $\tMeas(G)$
\cite[4.5]{Neufang2015uem};
in fact, even on bounded subsets of $\UMeas(G)$
\cite[Cor.9.36]{Pachl2013usm}.
However, as we shall see in this section,
convolution need not be jointly UEB continuous on the whole space $\tMeas(G)$.

The UEB topology is defined by certain seminorms $\normps{\cdot}$.
As a warm-up exercise, consider the continuity with respect to a single such seminorm:
Let $G$ be a metrizable topological group whose topology is defined
by a right-invariant metric $\Delta$.
As we pointed out in the introduction, the topology of the norm $\normps{\cdot}$ coincides with
the UEB topology on bounded subsets of $\LUC(G)^\ast$.
Hence $\conv$ is jointly $\normps{\cdot}$ continuous on bounded subsets of $\tMeas(G)$.
However, $\conv$ is not jointly $\normps{\cdot}$ continuous
on the whole space $\tMeas(G)$ or even $\Mol(G)$ for $G=\real$:

\begin{example}
Let $G$ be the additive group $\real$ with the usual metric $\Delta(x,y)=\abs{x-y}$.
For $j=1,2,\dotsc$ let $\M_j := \N_j := j\left(\pmass(1/j^2)-\pmass(0)\right)$
and $f_j(x) := \min(1,\abs{x-(1/j^2)})$ for $x\iin\real$.
Then $f_j\iin\BLipb(\Delta)$ and
\begin{align*}
\M_j \conv \N_j & = j^2 \left( \pmass(2/j^2) - 2 \pmass(1/j^2) + \pmass(0) \right)   \\
\normps{\M_j \conv \N_j} & \geq  \M_j \conv \N_j (f_j) = 2
\end{align*}
but $\lim_j \,\normps{\M_j} = \lim_j \,\normps{\N_j} = 0$.

Note that although the sequence $\{\M_j\}_j$ converges in the norm $\normps{\cdot}$,
it does not converge in the UEB topology;
in fact, $\norm{\M_j}_{\sqrt{\Delta}}=1$ for all $j$.
\qed
\end{example}

Next we shall see that the situation changes when we move from the topology defined by a single
seminorm $\normps{\cdot}$ to the topology defined by all such seminorms,
i.e. the UEB topology.

\begin{theorem}
    \label{th:main}
The following properties of a topological group $G$ are equivalent:
\begin{enumerate}[(i)]
\item
Convolution is jointly UEB continuous on $\LUC(G)^\ast$.
\item
Convolution is jointly UEB continuous on $\tMeas(G)$.
\item
Convolution is jointly UEB continuous on $\Mol(G)$.
\item
$G$ is a SIN group.
\end{enumerate}
\end{theorem}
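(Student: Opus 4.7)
The implications $(i) \Rightarrow (ii) \Rightarrow (iii)$ are immediate from the inclusions $\Mol(G) \subseteq \tMeas(G) \subseteq \LUC(G)^\ast$, so the work lies in $(iv) \Rightarrow (i)$ and $(iii) \Rightarrow (iv)$. For $(iv) \Rightarrow (i)$ I would appeal to Corollary~\ref{cor:biinv} to restrict the defining seminorms of the UEB topology to those $\normps{\cdot}$ coming from continuous \emph{bi}-invariant pseudometrics $\Delta$; then $\sqrt{\Delta}$ and $2\sqrt{\Delta}$ are also continuous and bi-invariant, so $\norm{\cdot}_{\sqrt{\Delta}}$ and $\norm{\cdot}_{2\sqrt{\Delta}}$ are themselves UEB-continuous seminorms. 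The bilinear estimate $\normps{\M \conv \N} \leq \sqrt{2}\, \norm{\M}_{\sqrt{\Delta}}\, \norm{\N}_{2\sqrt{\Delta}}$ of Lemma~\ref{lem:sqrootest} then gives joint UEB-continuity of $\conv$ at $(0,0)$, and since $\conv$ is bilinear this upgrades to joint continuity everywhere.

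For $(iii) \Rightarrow (iv)$ I argue the contrapositive. Suppose $G$ is not SIN. By Lemma~\ref{lem:SINinner} there is a continuous right-invariant pseudometric $\rho$ (coming from a neighbourhood of $e$ witnessing the failure of SIN) such that for every neighbourhood $V$ of $e$ one can find $v \iin V$ and $x \iin G$ with $\rho(xvx^{-1}, e) \geq 1$. I would construct a net in $\Mol(G) \times \Mol(G)$ that converges to $(0,0)$ in UEB but whose convolutions do not. Index by pairs $(\fset, n)$, where $\fset$ is a finite family of continuous right-invariant pseudometrics containing $\rho$ and $n$ is a positive integer, ordered componentwise; for each index pick $v = v_{\fset,n}$ in the neighbourhood $\{y \iin G : \Delta(y,e) < 1/n^2 \text{ for all } \Delta \iin \fset\}$ and $x = x_{\fset,n} \iin G$ with $\rho(xvx^{-1}, e) \geq 1$, and set
\[
\M_{\fset, n} := n(\pmass(xv) - \pmass(x)), \qquad \N_{\fset, n} := n(\pmass(x^{-1}) - \pmass(vx^{-1})) .
\]
Right-invariance of any continuous right-invariant $\Delta$ gives $\norm{\M_{\fset,n}}_\Delta = \norm{\N_{\fset,n}}_\Delta = n\,\Delta(v,e)$, which tends to $0$ along the net (since any given $\Delta$ eventually lies in $\fset$), so both $\M_{\fset,n}$ and $\N_{\fset,n}$ converge to $0$ in UEB. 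A direct computation yields $\M_{\fset,n} \conv \N_{\fset,n} = n^2\bigl(2\pmass(u) - \pmass(u^2) - \pmass(e)\bigr)$ with $u := xvx^{-1}$, and testing against the $\rho$-Lipschitz function $f(g) := \min(1, \rho(g,e)) \iin \BLipb(\rho)$, which takes the values $1$, at most $1$, and $0$ at $u$, $u^2$, $e$ respectively, forces $\norm{\M_{\fset,n} \conv \N_{\fset,n}}_\rho \geq n^2$, contradicting joint UEB-continuity on $\Mol(G)$.

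The main difficulty is the net construction: UEB-convergence demands that $n\,\Delta(v_{\fset,n}, e) \to 0$ for \emph{every} continuous right-invariant pseudometric $\Delta$ simultaneously, not merely for $\rho$ or for a fixed countable family, which is why the index set must range over finite subfamilies $\fset$ of pseudometrics rather than over integers alone. Once that is handled, the amplification phenomenon is captured by the identity $xv \cdot vx^{-1} = (xvx^{-1})^2 = u^2$, which turns the small quantities $n\,\Delta(v,e)$ quadratically into $n^2$ at the single point $u$, which is held at $\rho$-distance at least $1$ from $e$ by the failure of SIN.
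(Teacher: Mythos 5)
Your proof is correct, and the interesting divergence from the paper is in the implication (iii)$\Rightarrow$(iv). For (iv)$\Rightarrow$(i) you follow essentially the paper's route: Corollary~\ref{cor:biinv} plus the bilinear estimate of Lemma~\ref{lem:sqrootest}; the paper just writes out the triangle-inequality decomposition at an arbitrary point $(\M_0,\N_0)$ instead of invoking the (valid, since UEB neighbourhoods of $0$ are absorbing) general fact that a bilinear map on topological vector spaces continuous at the origin is jointly continuous everywhere. For (iii)$\Rightarrow$(iv) the paper argues directly rather than by contrapositive: from joint continuity at $(0,0)$ it extracts $\Delta$ and $\varepsilon$, sets $V=\{v \mid \Delta(v,e)<\varepsilon^2\}$, and tests the asymmetric pair $\M=\varepsilon\,\pmass(x)$, $\N=(\pmass(v)-\pmass(e))/\varepsilon$, whose convolution is exactly $\pmass(xv)-\pmass(x)$; the identity $\norm{\pmass(xv)-\pmass(x)}_\Theta=\min(2,\Theta(xvx^{-1},e))$ then yields $xVx^{-1}\subseteq U$ and SIN via Lemma~\ref{lem:SINinner}. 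You instead build a net indexed by finite families of pseudometrics and use the symmetric pair $n(\pmass(xv)-\pmass(x))$, $n(\pmass(x^{-1})-\pmass(vx^{-1}))$, whose convolution concentrates mass $n^2$ at $u=xvx^{-1}$. Both arguments exploit the same mechanism---right-invariance forces $\Delta(xv,x)=\Delta(v,e)$ to be small while $\rho(xvx^{-1},e)$ stays large---but the paper's direct version is shorter and needs no net bookkeeping, whereas yours makes the failure quantitative (an $n^2$ blow-up in the spirit of the warm-up Example) and correctly handles the uncountably many seminorms by letting the index range over finite subfamilies. Two cosmetic remarks: the exact value is $\norm{n(\pmass(xv)-\pmass(x))}_\Delta=n\min(2,\Delta(v,e))$ rather than $n\,\Delta(v,e)$, which is harmless since only the upper bound $n\,\Delta(v,e)$ is used; and your test function argument gives $\M_{\fset,n}\conv\N_{\fset,n}(f)=n^2(2-f(u^2))\geq n^2$, exactly as claimed.
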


\begin{proof}
Obviously (i)$\Rightarrow$(ii)$\Rightarrow$(iii).

To prove (iii)$\Rightarrow$(iv), assume that convolution is jointly UEB continuous on $\Mol(G)$.
Take any neighbourhood $U$ of the identity element $e$.
There is a continuous right-invariant pseudometric $\Theta$ such that
$\{z\iin G \mid \Theta(z,e)<1 \} \subseteq U$.
By the UEB continuity there are a continuous right-invariant pseudometric $\Delta$
and $\varepsilon>0$ such that
if $\M,\N\iin\Mol(G)$, $\normps{\M}, \normps{\N}\leq\varepsilon$ then
$\norm{\M\conv\N}_\Theta < 1$.
To conclude that $G$ is a SIN group, in view of Lemma~\ref{lem:SINinner} it is enough
to show that $xVx^{-1} \subseteq U$ for all $x\iin G$, where
$
V:=\{v\iin G \mid \Delta(v,e) < \varepsilon^2 \}
$.
To that end, take any $x\iin G$ and $v\iin V$ and define
\begin{align*}
\M & := \varepsilon \,\pmass(x)    \\
\N & := ( \pmass(v) - \pmass(e) ) / \varepsilon
\end{align*}
Then
$\norm{\M}_\Delta = \varepsilon$ and
$\norm{\N}_\Delta = \min(2,\Delta(v,e)) / \varepsilon < \varepsilon$,
hence
\[
\min(2,\Theta(xv,x)) = \norm{\pmass(xv) -\pmass(x)}_\Theta
= \norm{\M \conv \N}_\Theta < 1
\]
and therefore $\Theta(xvx^{-1},e) = \Theta(xv,x) < 1$ and $xvx^{-1}\iin U$.
That completes the proof of (iii)$\Rightarrow$(iv).

To prove (iv)$\Rightarrow$(i), assume that $G$ is a SIN group.
Take any continuous bi-invariant pseudometric $\Delta$ on $G$.
By Lemma~\ref{lem:sqrootest},
if $\M,\M_0,\N,\N_0\iin\LUC(G)^\ast$ are such that
$\norm{\M-\M_0}_{\sqrt{\Delta}}<\varepsilon$ and
$\norm{\N-\N_0}_{2\sqrt{\Delta}}<\varepsilon$ then
\begin{align*}
\normps{\M\conv\N - \M_0 \conv \N_0}
& \leq \normps{(\M-\M_0)\conv\N} + \normps{\M_0\conv(\N - \N_0)}
  \\
& \leq \sqrt{2} \;\varepsilon \;\norm{\N}_{2\sqrt{\Delta}}
  + \sqrt{2} \;\norm{\M_0}_{\sqrt{\Delta}} \;\varepsilon
  \\
& \leq \sqrt{2} \;\varepsilon
  \left( \varepsilon + \norm{\N_0}_{2\sqrt{\Delta}} + \norm{\M_0}_{\sqrt{\Delta}} \right)
\end{align*}
which along with Corollary~\ref{cor:biinv}
proves that $\conv$ is jointly UEB continuous at $(\M_0,\N_0)$.
\end{proof}


\section{Separate UEB continuity}
    \label{sec:sepc}

By Theorem~\ref{th:main}, convolution is jointly UEB continuous on $\LUC(G)^\ast$,
and therefore also separately UEB continuous, whenever $G$ is a SIN group.
On the other hand, as we explain at the end of this section, there are topological groups $G$
for which convolution is not separately UEB continuous on $\LUC(G)^\ast$.
Nevertheless, we now prove that convolution is separately UEB continuous on $\tMeas(G)$
for every topological group $G$.
The same proof may be used to show that convolution is separately UEB continuous even on $\UMeas(G)$.

\begin{lemma}
    \label{lem:tMeasUEB}
Let $G$ be a topological group, $\M\iin\tMeas(G)$,
and let $\Delta$ be a continuous right-invariant pseudometric on $G$.
Then there exists a continuous right-invariant pseudometric $\Delta_\M$ such that
$\sect{y} \M(\sect{x} f(xy)) \iin \norm{\M} \BLipb(\Delta_\M)$
for every $f\in\BLipb(\Delta)$.
\end{lemma}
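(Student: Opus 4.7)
The plan is to take $\Delta_\M$ to be the right-invariant pseudometric induced by the action of $\M$ on $\BLipb(\Delta)$: for $\M \neq 0$, set
\[
\Delta_\M(y_1, y_2) := \frac{1}{\norm{\M}} \sup_{f \iin \BLipb(\Delta)} \abs{g_f(y_1) - g_f(y_2)},
\]
where $g_f(y) := \M(\sect{x}f(xy))$; set $\Delta_\M \equiv 0$ when $\M = 0$. By construction $\Delta_\M$ is a symmetric, non-negative function satisfying the triangle inequality (bounded by $2$), and since $\abs{g_f(y)} \leq \norm{\M}$ for all $y$, one has $g_f \iin \norm{\M}\,\BLipb(\Delta_\M)$ automatically. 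What remains is to verify that $\Delta_\M$ is continuous and right-invariant on $G$.

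Right-invariance is immediate from right-invariance of $\Delta$: for any $w \iin G$ and $f \iin \BLipb(\Delta)$, the function $\tilde f := \sect{z}f(zw)$ again lies in $\BLipb(\Delta)$ and satisfies $g_f(y_i w) = g_{\tilde f}(y_i)$; hence the supremum defining $\Delta_\M(y_1 w, y_2 w)$ ranges over the same class of functions as the one defining $\Delta_\M(y_1, y_2)$. Combined with the triangle inequality, right-invariance reduces continuity of $\Delta_\M$ on $G\times G$ to the single requirement $\Delta_\M(y, e) \to 0$ as $y \to e$.

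For this last step I would invoke the tightness of $\M$. Given $\varepsilon > 0$, choose a compact $K \subseteq G$ with $\abs{\M}(G \setminus K) < \varepsilon$. For any $f \iin \BLipb(\Delta)$,
\[
\abs{g_f(y) - g_f(e)} \leq \int_K \abs{f(xy) - f(x)}\,d\abs{\M}(x) + 2\varepsilon \leq \int_K \Delta(xy, x)\,d\abs{\M}(x) + 2\varepsilon.
\]
Right-invariance of $\Delta$ gives $\Delta(xy, x) = \Delta(xyx^{-1}, e)$, and the map $(x, y) \mapsto \Delta(xyx^{-1}, e)$ is jointly continuous on $G \times G$ and vanishes on $K \times \{e\}$. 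Compactness of $K$ then produces a neighbourhood $V$ of $e$ such that $\sup_{x \iin K} \Delta(xyx^{-1}, e) < \varepsilon$ for every $y \iin V$, giving $\abs{g_f(y) - g_f(e)} \leq (\norm{\M} + 2)\varepsilon$ uniformly in $f$, and hence $\Delta_\M(y, e) \to 0$.

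The main obstacle is exactly this continuity step: it depends crucially on the \emph{uniform} smallness of $\Delta(xyx^{-1}, e)$ for $y$ near $e$ and $x$ ranging over a compact subset of $G$. For an arbitrary functional in $\LUC(G)^\ast$ no such compact subset need carry essentially all of its mass, which is why the hypothesis $\M \iin \tMeas(G)$ is essential; the same argument, with UEB-tightness in place of Radon tightness, would presumably yield the analogous statement on $\UMeas(G)$ referred to in the remark preceding the lemma.
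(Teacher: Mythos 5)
Your proof is correct, but it takes a genuinely different route from the paper's. The paper first reduces to the case $\M\geq 0$ (via a finite decomposition into positive functionals) and $\Delta\leq 2$, and then writes down the explicit pseudometric $\Delta_\M(y,z):=\M(\sect{x}\Delta(xy,xz))/\norm{\M}$; the Lipschitz estimate is then a one-line consequence of positivity, and continuity of $\Delta_\M$ is obtained by observing that $\sect{x}\Delta(xy,x)\iin 2\BLipb(\Delta)$ and invoking the fact that a tight measure is continuous on bounded equi-Lipschitz sets in the topology of pointwise convergence. You instead take $\Delta_\M$ to be the supremum of the increments of the functions $g_f$, which makes the Lipschitz conclusion tautological and avoids the positivity reduction altogether, and you then prove continuity directly from Radon tightness via a tube-lemma compactness argument giving $\sup_{x\iin K}\Delta(xyx^{-1},e)\to 0$ as $y\to e$ (your $\Delta_\M$ is in fact dominated by the paper's when $\M\geq 0$, since $\abs{g_f(y)-g_f(z)}\leq\M(\sect{x}\Delta(xy,xz))$). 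What each approach buys: the paper's version delegates the continuity step to a general property of tight measures that applies verbatim to uniform measures, so the extension to $\UMeas(G)$ announced before the lemma is immediate; your version is more self-contained and elementary, but, as you correctly note, the compact-set argument would have to be replaced by an equicontinuity argument to cover $\UMeas(G)$. All the steps you give check out: right-invariance follows because $f\mapsto\sect{z}f(zw)$ is a bijection of $\BLipb(\Delta)$ (using right-invariance of $\Delta$), and the reduction of joint continuity of a right-invariant pseudometric to continuity at the identity is the same implicit step the paper uses.
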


\begin{proof}
Evidently $\norm{\sect{y} \M(\sect{x} f(xy))} \leq \norm{\M}$ for every $f\in\BLipb(\Delta)$.
To prove that the function $\sect{y} \M(\sect{x} f(xy))$ is Lipschitz for a suitable $\Delta_\M$,
first note that if $\M=\sum_j c_j \M_j$, $\M_j\iin\LUC(G)^\ast$, is a finite linear combination
such that \[
\abs{\M_j(\sect{x} f(xy)) - \M_j(\sect{x} f(xz))} \leq \Delta_j(y,z)
\]
for every $j$ and $y,z\iin G$,
then
\[
\abs{\M(\sect{x} f(xy)) - \M(\sect{x} f(xz))} \leq \Delta^\prime(y,z)
\]
where $\Delta^\prime = \sum_j \abs{c_j} \Delta_j$.
Thus it is enough to prove the lemma assuming that $\M\geq 0$.

We may also assume that $\Delta\leq 2$,
as replacing $\Delta$ by $\min(\Delta,2)$ does not change $\BLipb(\Delta)$.
For $\M\geq 0$, $\M\neq 0$, and $\Delta\leq 2$,
define $\Delta_\M$ by
\[
\Delta_\M(y,z):=\M(\sect{x} \Delta(xy,xz)) / \norm{\M} \;\text{ for } \; y,z\iin G.
\]
Clearly $\Delta_\M$ is a right-invariant pseudometric.
To see that it is continuous, first apply the estimate
\[
\abs{\Delta(xy,x)-\Delta(wy,w)} \leq \Delta(xy,wy)+\Delta(x,w) = 2 \Delta(x,w)
\]
which shows that $\sect{x} \Delta(xy,x) \iin 2 \BLipb(\Delta)$ for every $y\iin G$.
Since $\M$ is continuous on $2\BLipb(\Delta)$ in the topology of pointwise convergence,
it follows that $\Delta_\M(y,e)$ is a continuous function of $y$ on $G$.

For any $f\iin\BLipb(\Delta)$ we have
\begin{align*}
\abs{\M(\sect{x} f(xy)) - \M(\sect{x} f(xz))}
& \leq \M \left( \sect{x} \abs{ f(xy)-f(xz) } \right)    \\
& \leq \M \left( \sect{x} \Delta(xy,xz) \right)
= \norm{\M} \Delta_\M (y,z)
\end{align*}
for $y,z\iin G$.
\end{proof}

\begin{theorem}
For every topological group $G$
convolution is separately UEB continuous on $\tMeas(G)$.
\end{theorem}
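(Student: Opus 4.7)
The plan is to split the statement into two halves: for fixed $\N_0\iin\tMeas(G)$, prove UEB continuity of $\M\mapsto\M\conv\N_0$; and for fixed $\M_0\iin\tMeas(G)$, prove UEB continuity of $\N\mapsto\M_0\conv\N$ on $\tMeas(G)$. Throughout, fix a continuous right-invariant pseudometric $\Delta$ on $G$; it then suffices to bound $\normps{\M\conv\N_0 - \M_0\conv\N_0}$ and $\normps{\M_0\conv\N - \M_0\conv\N_0}$ by suitable UEB seminorms of $\M-\M_0$ and $\N-\N_0$.

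The first half does not require any tightness and rests only on right-invariance of $\Delta$. For $f\iin\BLipb(\Delta)$ and $x,x',y\iin G$ we have $\abs{f(xy)-f(x'y)}\leq\Delta(xy,x'y)=\Delta(x,x')$, whence ${\N_0\bullet} f\iin\norm{\N_0}\BLipb(\Delta)$. Therefore
\[
\abs{(\M-\M_0)\conv\N_0(f)} = \abs{(\M-\M_0)({\N_0\bullet} f)} \leq \norm{\N_0}\,\normps{\M-\M_0},
\]
so $\normps{(\M-\M_0)\conv\N_0}\leq\norm{\N_0}\,\normps{\M-\M_0}$. In fact this argument shows that right multiplication by any $\N_0\iin\LUC(G)^\ast$ is UEB continuous on all of $\LUC(G)^\ast$.

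The second half is the substantive part. The obstacle is that, without bi-invariance of $\Delta$, the function $\sect{y}f(xy)$ is not Lipschitz in $y$ with a bound independent of $x$, so one cannot directly estimate ${(\N-\N_0)\bullet} f$ pointwise by a UEB seminorm of $\N-\N_0$. The idea is to reverse the order of integration: since $\M_0,\N\iin\tMeas(G)$ are finite Radon measures and $\sect{(x,y)}f(xy)$ is bounded continuous, Fubini's theorem gives
\[
\M_0\conv\N(f) = \int_G\int_G f(xy)\rmd\mu_0(x)\rmd\nu(y) = \N(g_f),
\]
where $g_f(y):=\M_0(\sect{x}f(xy))$. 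Lemma~\ref{lem:tMeasUEB} applied to $\M_0$ then supplies a continuous right-invariant pseudometric $\Delta_{\M_0}$ such that $g_f\iin\norm{\M_0}\BLipb(\Delta_{\M_0})$ for every $f\iin\BLipb(\Delta)$. Consequently,
\[
\abs{\M_0\conv\N(f) - \M_0\conv\N_0(f)} = \abs{(\N-\N_0)(g_f)} \leq \norm{\M_0}\,\norm{\N-\N_0}_{\Delta_{\M_0}},
\]
yielding UEB continuity of left multiplication by $\M_0$ at $\N_0$.

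The main obstacle is the Fubini swap in the second half: it is the only point where tightness of $\N$ is used, and it is precisely what ties the statement to $\tMeas(G)$ rather than all of $\LUC(G)^\ast$. Once the swap is justified, Lemma~\ref{lem:tMeasUEB} and the basic right-invariance estimate finish the proof.
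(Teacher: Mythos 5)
Your proposal is correct and follows essentially the same route as the paper: continuity in the first variable via the observation that $f\mapsto{\N_0\bullet}f$ maps $\BLipb(\Delta)$ into $\norm{\N_0}\BLipb(\Delta)$ (the paper cites a general result for this step, you prove it directly), and continuity in the second variable via the Fubini reversal $\M_0\conv\N(f)=\N(\sect{y}\M_0(\sect{x}f(xy)))$ combined with Lemma~\ref{lem:tMeasUEB}. The only difference is that you invoke Fubini for a pair of tight measures, whereas the paper's variant of Fubini allows $\N\in\LUC(G)^\ast$; for the theorem as stated your version suffices.
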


\begin{proof}
For every $\N\iin\LUC(G)^\ast$ the mapping $\M\mapsto\M\conv\N$ is UEB continuous
--- this is a special case of~\cite[Cor.9.21]{Pachl2013usm}.

For $\M\iin\tMeas(G)$ and $\N\iin\LUC(G)^\ast$ we may reverse the order of applying $\M$ and $\N$
in the definition of convolution:
\[
\M\conv\N (f)=\N \left(\sect{y}\M(\sect{x} f(xy)) \right)
\;\text{ for }\; f\iin \LUC(G).
\]
This is a consequence of a variant of Fubini's theorem;
see~\cite[sec.9.4]{Pachl2013usm} for a proof and discussion.

The UEB continuity of the mapping $\N\mapsto\M\conv\N$ for every $\M\iin\tMeas(G)$
now follows from Lemma~\ref{lem:tMeasUEB}.
\end{proof}

In analogy with the commonly studied \wstar topological centre of $\LUC(G)^\ast$,
we may also consider its UEB topological centre $\Lambda_{UEB}$,
the set of those $\M\iin\LUC(G)^\ast$ for which the mapping
$\N \mapsto \M\conv\N$ is UEB continuous on $\LUC(G)^\ast$.
Then $\Lambda_{UEB}=\LUC(G)^\ast$ for every SIN group $G$ by Theorem~\ref{th:main}.
Example~4.7 in~\cite{Neufang2015uem} (which is also Example~9.39 in~\cite{Pachl2013usm})
shows that $\Lambda_{UEB}\neq\LUC(G)^\ast$
when $G$ is the group of homeomorphisms of the interval $[0,1]$ onto itself
with the topology of uniform convergence.
Next we shall show that in fact $\Lambda_{UEB}\neq\LUC(G)^\ast$ for every topological group $G$
that contains a non-SIN subgroup that is locally compact or metrizable.

For any topological group $G$ denote by $\RUC(G)$ the space of those
bounded continuous functions $f$ on $G$
for which the mapping $x\mapsto \sect{y} f(yx)$ is continuous from $G$ to
the space $\ellinfty(G)$ with the sup norm.
In other words, $\RUC(G)$ is the space of bounded left uniformly continuous functions on $G$.

Note that $g\iin\LUC(G)$ if and only if $\sect{x} g(x^{-1})\iin\RUC(G)$.
Thus $\LUC(G)=\RUC(G)$ if and only if $\LUC(G)\subseteq\RUC(G)$.
It is a long-standing open problem whether every topological group $G$
such that $\LUC(G)=\RUC(G)$ is a SIN group.
Bouziad and Troallic~\cite{Bouziad2007pau} survey a number of partial answers,
including the one in the next lemma, which follows from a more general result
of Protasov~\cite{Protasov1991fbg}.

\begin{lemma}
    \label{lem:FSIN}
Let $G$ be a topological group that is locally compact or metrizable and such that
$\LUC(G)\subseteq\RUC(G)$.
Then $G$ is a SIN group.
\qed
\end{lemma}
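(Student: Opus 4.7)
The plan is to prove the contrapositive: if $G$ is not SIN, then $\LUC(G) \not\subseteq \RUC(G)$. By Lemma~\ref{lem:SINinner}, failure of SIN furnishes a neighbourhood $U$ of $e$ such that for every neighbourhood $V$ of $e$ there exist $x_V \in G$ and $v_V \in V$ with $x_V v_V x_V^{-1} \notin U$. Setting $a_V := x_V$ and $b_V := x_V v_V^{-1}$, the pair satisfies $a_V^{-1} b_V = v_V^{-1} \to e$, so the pair converges in the left uniformity, while $a_V b_V^{-1} = x_V v_V x_V^{-1}$ stays outside $U$, so the pair remains uniformly apart in the right uniformity. The target is to assemble a bounded $f \in \LUC(G)$ (that is, right-uniformly continuous) which separates infinitely many such pairs by a fixed amount; for such an $f$, the left-convergence $a_n^{-1} b_n \to e$ forces $f \notin \RUC(G)$, contradicting the hypothesis.

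In the metrizable case, fix a compatible right-invariant metric $d$ on $G$ (Birkhoff--Kakutani, after inverting a left-invariant metric). The pairs above yield a sequence with $a_n^{-1} b_n \to e$ and $d(a_n, b_n) \geq \delta_0 > 0$ for all $n$. Passing inductively to a sufficiently sparse subsequence, I can arrange that $d(a_n, a_m) \geq \delta_0$ and $d(a_n, b_m) \geq \delta_0$ for all $m < n$, so that the right-balls $B_d(a_n, \delta_0/3)$ are pairwise disjoint and no $b_m$ lies in $B_d(a_n, \delta_0/3)$ for $m \ne n$. Then $f := \sum_n \phi_n$, with $\phi_n(x) := \max(0, 1 - 3 d(x, a_n)/\delta_0)$, is bounded and $(3/\delta_0)$-Lipschitz with respect to $d$ (hence $f \in \LUC(G)$), and it satisfies $f(a_n) = 1$ and $f(b_n) = 0$ for each $n$, completing the contradiction.

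For the locally compact case the strategy is analogous but combinatorial rather than metric: fix a compact symmetric neighbourhood $K$ of $e$ with $KKK \subseteq U$, extract a sequence of pairs using $\sigma$-compactness of the subgroup generated by the $x_V$, and replace the Lipschitz bumps by left-translates of a single continuous Urysohn-type function supported in $K$. After sparsifying so that the translated supports are pairwise disjoint, the resulting $f$ belongs to $\LUC(G)$ by the uniform continuity of the base bump and again separates the $a_n$ from the $b_n$. The main obstacle in either case is precisely this sparsification step: one must ensure that the right-uniform neighbourhood used to define each bump $\phi_n$ does not swallow any $a_m$ or $b_m$ for $m \ne n$, and carrying out the extraction rests essentially on either a compatible metric or on local compactness. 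Without one of these structural hypotheses, the question is Itzkowitz's problem, which remains open in general.
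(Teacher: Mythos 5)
First, be aware that the paper does not prove this lemma at all: it is quoted from Protasov's work on functionally balanced groups (via the Bouziad--Troallic survey), because it is precisely the set of resolved cases of Itzkowitz's problem, which remains open for general topological groups. So a short self-contained argument should be treated with suspicion. Your reduction is the correct one and is where every known proof starts: failure of the condition in Lemma~\ref{lem:SINinner} yields pairs $(a_V,b_V)$ with $a_V^{-1}b_V\to e$ and $a_Vb_V^{-1}\notin U$, and it suffices to produce one bounded function in $\LUC(G)$ separating enough of these pairs. The gap is exactly where you flag it, and it is not cosmetic. In the metrizable case your claim that a subsequence can be extracted with $d(a_n,a_m)\ge\delta_0$ for $m<n$ is false in general: $d$ is only right-invariant and $G$ need be neither complete nor locally compact, so right-balls need not be precompact and the $a_n$ may accumulate inside a single ball of radius $\delta_0/4$ without converging; then no subsequence is uniformly discrete and the disjoint-bump construction cannot be carried out as written. (That failure mode is repairable: if infinitely many $a_n$ lie in one $\delta_0/4$-ball, then $d(b_n,a_m)\ge\delta_0-\delta_0/2>\delta_0/2$ holds automatically for all $n,m$ in that subfamily, so $x\mapsto\max\bigl(0,\,1-(2/\delta_0)\,d(x,\{a_m\})\bigr)$ already works; in the opposite case, where an infinite $\delta_0/4$-discrete subfamily exists, one must still prevent $b_n$ from landing near $a_m$ for $m\ne n$, which requires a further combinatorial extraction --- for instance, the digraph $n\to m$ iff $d(b_n,a_m)<\delta_0/8$ has out-degree at most $1$, hence admits an infinite independent set. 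None of this is in your write-up, and the statement you do assert is the one that fails.)

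The locally compact case is even less complete. Beyond the unproved disjointness of the translated supports, there is an obstruction you do not address: when $G$ is not metrizable, the pairs $(a_V,b_V)$ are indexed by the full neighbourhood filter at $e$, which may have no countable base, and to contradict $f\in\RUC(G)$ your single $f$ must separate pairs with $v_V\in V$ for a family of $V$ cofinal in that filter, not merely along one sequence extracted from the net; ``extract a sequence of pairs using $\sigma$-compactness of the subgroup generated by the $x_V$'' does not engage with this, since the $x_V$ need not lie in any compactly generated subgroup. The known proofs handle the two cases by genuinely harder means (structure theory of locally compact groups, or Protasov's argument for almost metrizable groups), which is why the paper cites the result rather than proving it. As it stands, your proposal is the right strategy with the decisive steps missing, and its one concrete quantitative claim is incorrect.
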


As in~\cite[sec.6.5]{Pachl2013usm}, when each element $x$ of a topological group $G$
is identified with the point mass $\pmass(x)\iin\LUC(G)^\ast$
and $\LUC(G)^\ast$ is equipped with its \wstar topology,
we obtain topological embeddings $G\subseteq\compln{G}\subseteq G^\LUC \subseteq \LUC(G)^\ast$.
Here $\compln{G}$ is the completion of $G$ (with its right uniformity) and
$G^\LUC=\compactn G$ is its uniform compactification.
The embedding $G\subseteq\compln{G}$ is not only topological but uniform as well.
Both $\compln{G}$ and $G^\LUC$ are subsemigroups of $\LUC(G)^\ast$ with the convolution operation.

The following theorem will be applied in two cases:
When $G$ is locally compact or completely metrizable, we let $S=G$.
When $G$ is merely metrizable, we let $S=\compln{G}$.

\begin{theorem}
    \label{th:contSIN}
Let $G$ be a topological group that is locally compact or metrizable.
Let $S$ be a subsemigroup of $G^\LUC$ such that
\begin{enumerate}[(a)]
\item
$G\subseteq S$,
\item
the topology of $S$ is locally compact or completely metrizable, and
\item
    \label{th:contSIN:itemc}
for every $\M\iin G^\LUC$
the mapping $x \mapsto \M\conv x$ from $S$ to $G^\LUC$ is continuous.
\end{enumerate}
Then $G$ is a SIN group.
\end{theorem}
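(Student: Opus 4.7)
The overall strategy is to verify the hypothesis of Lemma~\ref{lem:FSIN} by showing $\LUC(G)\subseteq\RUC(G)$, so that Lemma~\ref{lem:FSIN} then forces $G$ to be a SIN group. Fix an arbitrary $f\iin\LUC(G)$ and define $\sigma_f\colon S\to\LUC(G)$ by $\sigma_f(x)(y):=x(\sect{z} f(yz))$, which agrees with the right translate $\sect{y}f(yx)$ when $x\iin G$. The bilinear evaluation
\[
h(x,\M):=\M(\sigma_f(x))=(\M\conv x)(f),\qquad (x,\M)\iin S\times G^\LUC,
\]
is separately continuous: continuity in $x$ for each fixed $\M\iin G^\LUC$ is exactly hypothesis~(c), and \wstar continuity in $\M$ for each fixed $x\iin S$ holds because $\sigma_f(x)\iin\LUC(G)$.

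The space $S$ is Baire by hypothesis~(b) and the \wstar unit ball of $G^\LUC$ is compact Hausdorff, so Namioka's joint continuity theorem furnishes a comeager set $A\subseteq S$ at every point of which $h$ is jointly continuous with $\M$ ranging over the unit ball. A finite-cover argument on the compact fibre upgrades joint continuity to
\[
\sup_{\norm{\M}\leq 1} \abs{h(x,\M)-h(x_0,\M)} \;=\; \norm{\sigma_f(x)-\sigma_f(x_0)}_\infty \;\to\; 0
\]
as $x\to x_0$ in $S$, for every $x_0\iin A$; that is, $\sigma_f\colon S\to(\LUC(G),\norm{\cdot}_\infty)$ is continuous at each $x_0\iin A$.

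The next step is to transport this continuity to the identity $e\iin G\subseteq S$ via homogeneity. For $g\iin G$, left translation $\lambda_g(x)=gx$ is a homeomorphism of $S$ (both $\lambda_g$ and $\lambda_{g^{-1}}$ are continuous by~(c)), and associativity of the left action gives $\sigma_f\circ\lambda_g=R_g\circ\sigma_f$, where $R_g\phi(y):=\phi(yg)$ is an isometry of $\LUC(G)$. Hence the set $C$ of sup-norm continuity points of $\sigma_f$ is left-$G$-invariant and contains $A$, so it is comeager in $S$. Once one knows $C\cap G\neq\emptyset$, $G$-invariance forces $G\subseteq C$; in particular $e\iin C$, and continuity of $\sigma_f$ at $e$ restricted to $v\iin G$ reads $\norm{R_v f-f}_\infty\to 0$ as $v\to e$ in $G$, i.e., $f\iin\RUC(G)$.

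The main obstacle is thus to verify $C\cap G\neq\emptyset$, equivalently that $G$ is non-meager in $S$. When $S=G$ (the case where $G$ itself is locally compact or completely metrizable), this is immediate from the Baire property of $S$, since $A\subseteq S=G$. When $S=\compln{G}$ and $G$ is merely metrizable, $G$ could a priori be meager in $\compln{G}$; handling this case is the delicate part, which I would expect to require combining the density of $A$ in $S$ with the continuity of the right multiplication $v\mapsto x_0\conv v$ from $G$ to $S$ (hypothesis~(c) applied with $\M=x_0\iin S$), so as to exhibit a continuity point of $\sigma_f$ genuinely inside $G$. Once this is in place, Lemma~\ref{lem:FSIN} yields the SIN conclusion.
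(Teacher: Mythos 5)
Your overall strategy is the paper's: show $\LUC(G)\subseteq\RUC(G)$ and invoke Lemma~\ref{lem:FSIN}, after deducing sup-norm continuity of $x\mapsto\sigma_f(x)$ from joint continuity of $(\M,x)\mapsto\M\conv x(f)$ on $G^\LUC\times S$ via a finite-cover argument on the compact fibre. The paper obtains the joint continuity by citing the Ellis--Lawson theorem \cite[1.4.2]{Berglund1989aos} and the sup-norm upgrade from \cite[B.3]{Berglund1989aos}; you instead try to prove the joint-continuity step from scratch via Namioka plus homogeneity, and there the argument has a genuine gap --- one you candidly flag yourself. Your transfer mechanism (the set $C$ of continuity points of $\sigma_f$ is comeager in $S$ and invariant under left translation by $G$, hence contains $G$ once it meets $G$) requires $C\cap G\neq\emptyset$, and you have no way to produce this when $G$ is meager in $S$. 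That is not a corner case: it is exactly the situation $S=\compln{G}$ with $G$ metrizable but not completely metrizable, which is the case the theorem must cover for Corollary~\ref{cor:disMetr}. As written, your argument proves the theorem only when $S=G$ is locally compact or completely metrizable.

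The missing idea is that the Ellis--Lawson argument does not locate a continuity point inside $G$ by translation; it establishes joint continuity of the action at every point of $G^\LUC\times\{e\}$ directly from the density of the Namioka set $A$ in $S$. Concretely: suppose $\M_\alpha\to\M_0$ in $G^\LUC$ and $s_\alpha\to e$ in $S$, and let $\N$ be a \wstar cluster point of $\M_\alpha\conv s_\alpha$ (compactness of $G^\LUC$). For $a\iin A$ one has $s_\alpha\conv a\to a$ in $S$ (the left variable of $\conv$ is always \wstar continuous and $S$ is a subsemigroup), so associativity together with joint continuity at $(\M_0,a)$ gives $\N\conv a=\lim_\alpha(\M_\alpha\conv s_\alpha)\conv a=\lim_\alpha\M_\alpha\conv(s_\alpha\conv a)=\M_0\conv a$. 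Since $A$ is dense in $S$ and, by hypothesis~(\ref{th:contSIN:itemc}), both $x\mapsto\N\conv x$ and $x\mapsto\M_0\conv x$ are continuous on $S$, the identity $\N\conv x=\M_0\conv x$ holds for all $x\iin S$, and taking $x=e$ yields $\N=\M_0$. Joint continuity at $(\M_0,g)$ for $g\iin G$ then follows by writing $s_\alpha=(s_\alpha\conv g^{-1})\conv g$. This closing argument needs the identity element, associativity, compactness of $G^\LUC$, and a second application of hypothesis~(c) --- ingredients you name but do not assemble --- so the proposal is incomplete precisely at its decisive step.
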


The main argument in the following proof is used in the proof of~\cite[4.4.5]{Berglund1989aos}.

\begin{proof}
Take any $f\iin\LUC(G)$.
Define $\varphi\colon G^\LUC \times G \to \real$ by
$\varphi(\M,x) := \M\conv x (f)$ for $\M\iin G^\LUC$, $x\iin G$.

From the definition of $\conv$, for every $\N\iin\LUC(G)^\ast$
the mapping $\M \mapsto \M \conv \N$ is \wstar continuous on $\LUC(G)^\ast$.
That along with (\ref{th:contSIN:itemc}) implies that the convolution operation
is separately continuous on the product $G^\LUC \times S$,
therefore jointly continuous on $G^\LUC \times G$ by~\cite[1.4.2]{Berglund1989aos}.

It follows that $\varphi$ is jointly continuous on $G^\LUC \times G$.
Then by~\cite[B.3]{Berglund1989aos} the mapping $x\mapsto \sect{\M} \varphi(\M,x)$ is continuous
from $G$ to $\ellinfty(G^\LUC)$ with the sup norm.
Hence the mapping $x\mapsto \sect{\M} \varphi(\M,x) \upharpoonright G$ is continuous
from $G$ to $\ellinfty(G)$ with the sup norm.
But $\varphi(y,x)=f(yx)$ for $x,y\iin G$,
and we get $f\iin\RUC(G)$ by the definition of $\RUC(G)$.
That proves $\LUC(G)\subseteq\RUC(G)$.
Using Lemma~\ref{lem:FSIN} we conclude that $G$ is a SIN group.
\end{proof}

For locally compact non-SIN groups the following corollary was proved
by Lau and Pym~\cite[3.1]{Lau1995tcc}.

\begin{corollary}
    \label{cor:disLC}
Let $G$ be a non-SIN group whose topology is locally compact or completely metrizable.
Then there exists $\M\iin G^\LUC$ for which the mapping
$x \mapsto \M\conv x$ from $G$ to $G^\LUC$ is not continuous.
\qed
\end{corollary}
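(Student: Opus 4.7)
My plan is to read Corollary~\ref{cor:disLC} as the direct contrapositive of Theorem~\ref{th:contSIN} with the choice $S := G$. The assumption that $G$ is locally compact or completely metrizable matches condition (b) of the theorem exactly when $S = G$, and it certainly implies the theorem's standing hypothesis on $G$ (locally compact or merely metrizable).

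It then remains to verify condition (a) and the underlying subsemigroup structure $G \subseteq G^\LUC$. Condition (a), $G \subseteq S$, is immediate when $S = G$. For the subsemigroup claim, I would unwind the definitions of $\bullet$ and $\conv$ to obtain
\[
\pmass(x)\conv\pmass(y)(f) = \pmass(x)({\pmass(y)\bullet} f) = {\pmass(y)\bullet} f(x) = \pmass(y)(\sect{z} f(xz)) = f(xy),
\]
so that $\pmass(x)\conv\pmass(y) = \pmass(xy)$; hence the image of $G$ under $x\mapsto\pmass(x)$ is closed under $\conv$ and forms a subsemigroup of $G^\LUC$.

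With (a) and (b) in place, I argue by contradiction. Suppose that condition (c) of Theorem~\ref{th:contSIN} also holds, i.e.\ for every $\M \iin G^\LUC$ the map $x \mapsto \M\conv x$ from $G$ to $G^\LUC$ is continuous. Then the theorem forces $G$ to be a SIN group, contradicting the hypothesis of the corollary. Therefore some $\M \iin G^\LUC$ must witness the discontinuity of $x \mapsto \M\conv x$, which is exactly the conclusion of Corollary~\ref{cor:disLC}.

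There is essentially no obstacle here: the entire argument is a contrapositive application of Theorem~\ref{th:contSIN} specialized to $S = G$, together with the one-line observation that point masses form a subsemigroup of $G^\LUC$. The only reason the statement needs to separate the ``completely metrizable'' case from the merely ``metrizable'' case in Theorem~\ref{th:contSIN} is that we need condition (b) for $S$; this is automatic here because the corollary's hypothesis is stated as locally compact \emph{or completely} metrizable.
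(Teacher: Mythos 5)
Your proposal is correct and is exactly the paper's intended argument: the corollary is stated as an immediate consequence of Theorem~\ref{th:contSIN} applied with $S=G$ (the paper explicitly announces this choice of $S$ just before the theorem), read in contrapositive form. The verification that point masses form a subsemigroup is already recorded in the paper's preliminaries, so your one-line computation is a harmless re-derivation rather than a new ingredient.
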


Many infinite-dimensional groups of automorphisms,
such as those discussed by Pestov~\cite{Pestov2006did},
are metrizable by a complete metric and not SIN.
This includes the groups of autohomeomorphisms of the interval $[0,1]$
and of the Cantor set $2^{\aleph_0}$ with the topology of uniform convergence,
groups of automorphisms of many Fra\"{i}ss\'{e} structures with the topology of pointwise convergence,
and the unitary group of an infinite-dimensional Hilbert space.

\begin{corollary}
    \label{cor:disMetr}
Let $G$ be a metrizable non-SIN group.
Then there exists $\M\iin G^\LUC$ for which the mapping
$x \mapsto \M\conv x$ from $\compln{G}$ to $G^\LUC$ is not continuous.
\end{corollary}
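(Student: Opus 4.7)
The plan is to obtain Corollary~\ref{cor:disMetr} by a direct contrapositive application of Theorem~\ref{th:contSIN}, choosing $S := \compln{G}$, the completion of $G$ with respect to its right uniformity. The excerpt itself signals this strategy in the sentence preceding Theorem~\ref{th:contSIN}: ``When $G$ is merely metrizable, we let $S=\compln{G}$.''

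First I would verify that the hypotheses of Theorem~\ref{th:contSIN} are met with $S=\compln{G}$. The hypothesis that $G$ is locally compact or metrizable is granted. For condition~(a), the inclusion $G \subseteq \compln{G}$ is built into the definition of the completion. That $\compln{G}$ is a subsemigroup of $G^{\LUC}$ is noted in the paragraph just before Theorem~\ref{th:contSIN}, where it is observed that both $\compln{G}$ and $G^{\LUC}$ sit inside $\LUC(G)^\ast$ as convolution subsemigroups. For condition~(b), I would invoke the Birkhoff--Kakutani theorem: since $G$ is metrizable, its right uniformity admits a compatible right-invariant metric, and the metric completion of that metric makes $\compln{G}$ completely metrizable. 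Here the only thing to be careful about is that the topology inherited by $\compln{G}$ as a \wstar subspace of $G^{\LUC}$ coincides with the usual completion topology; this agreement is precisely the content of the statement in the excerpt that $G \subseteq \compln{G} \subseteq G^{\LUC}$ is both a topological and a uniform embedding.

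Second, I would run the contrapositive. Assume that every $\M \in G^{\LUC}$ gives a continuous mapping $x \mapsto \M \conv x$ from $\compln{G}$ to $G^{\LUC}$. Together with the structural verifications above, this is exactly the remaining condition~(\ref{th:contSIN:itemc}) needed for Theorem~\ref{th:contSIN} to fire. Its conclusion is that $G$ is a SIN group, contradicting the hypothesis of Corollary~\ref{cor:disMetr}. Hence there must exist some $\M \in G^{\LUC}$ violating the continuity, which is what the corollary asserts.

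There is no real obstacle, only a bookkeeping subtlety: one must ensure that the natural completion of the metrizable group and the \wstar closure of $G$ inside $G^{\LUC}$ really do produce the same object with the same topology, so that the hypothesis of Theorem~\ref{th:contSIN} and the conclusion of Corollary~\ref{cor:disMetr} refer to the same $\compln{G}$. This identification has already been set up earlier in the excerpt, so no additional work is required beyond citing it.
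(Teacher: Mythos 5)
Your proposal is correct and follows exactly the paper's own argument: the paper's proof is the one-line ``Apply Theorem~\ref{th:contSIN} with $S=\compln{G}$, which of course is completely metrizable,'' i.e.\ the same contrapositive application with the same choice of $S$. Your additional verifications of conditions (a)--(c) and the identification of the completion inside $G^\LUC$ are exactly the bookkeeping the paper leaves implicit.
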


\begin{proof}
Apply Theorem~\ref{th:contSIN} with $S=\compln{G}$, which of course is completely metrizable.
\end{proof}

By~\cite[Cor.6.13]{Pachl2013usm} the UEB and \wstar topologies coincide on $\compln{G}$.
That together with the two corollaries shows that
for any non-SIN group $G$ that is locally compact or metrizable
there exists $\M\iin G^\LUC$ for which the mapping
$x \mapsto \M\conv x$ from $\compln{G}$ to $G^\LUC$ is not UEB continuous,
and thus convolution is not separately UEB continuous on $G^\LUC$.

More generally, to exhibit such a discontinuity
it is enough to show that one of the two corollaries applies to a subgroup $H$ of $G$.
Indeed, if $H$ is a topological subgroup of $G$ then $H$ is a uniform subspace of $G$
when both are considered with their right uniformities~\cite[3.24]{Roelcke1981ust}.
Hence $H^\LUC$ is embedded in $G^\LUC$, both topologically and algebraically
(with the convolution operation).
It follows that convolution is not separately UEB continuous on $G^\LUC$
whenever $G$ contains a locally compact or metrizable subgroup that is not SIN.

Thus Corollary~\ref{cor:disMetr} holds for a large class of not necessarily metrizable
non-SIN groups.
We do not know whether it holds for every non-SIN group.


\textit{Acknowledgements.}
Jan Pachl appreciates the supportive environment at the Fields Institute.
Research of Juris Stepr\={a}ns for this paper was partially supported by NSERC of Canada.


\vspace{1cm}\noindent
Jan Pachl \\
Fields Institute  \\
222 College Street \\
Toronto, Ontario  M5T 3J1   \\
Canada   \\

\noindent
Juris Stepr\={a}ns   \\
Department of Mathematics and Statistics    \\
York University   \\
Toronto, Ontario  M3J 1P3   \\
Canada

\end{document}